\newtheorem{proposition}{Proposition}
\newtheorem{lemma}[proposition]{Lemma}
\newtheorem{theorem}[proposition]{Theorem}
\newtheorem*{theoremut}{Uniformization theorem}
\newtheorem*{theoremrmt}{Riemann mapping theorem}
\newtheorem*{theorempit}{Poisson integral formula}
\theoremstyle{definition}
\newtheorem{definition}[proposition]{Definition}
\newtheorem{remark}[proposition]{Remark}
\newtheorem*{remark*}{Remark}
\def\RR{{\mathbb R}}
\def\NN{{\mathbb N}}
\def\CC{{\mathbb C}}
\def\DD{{\mathbb{D}}}
\newcommand{\supp}{\operatorname{supp}}
\begin{document}
\title{Uniformization of Riemann surfaces revisited}
\author{Cipriana Anghel}
\address{Institute of Mathematics of the Romanian Academy\\
Calea Grivi\c tei 21\\
010702 Bucharest\\ 
Romania}
\author{Rare\c s Stan}
\date{\today}
\begin{abstract}
We give an elementary and self-contained proof of the uniformization theorem for non-compact simply-connected Riemann surfaces.
\end{abstract}
\maketitle

\section{Introduction} 

Paul Koebe and shortly thereafter Henri Poincar\'e are credited with having proved in 1907 the famous \emph{uniformization theorem} for Riemann surfaces, arguably the single most important result in the whole theory of analytic functions of one complex variable. This theorem generated connections between different areas and lead to the development of new fields of mathematics.
After Koebe, many proofs of the uniformization theorem were proposed, all of them relying on a large body of topological and analytical prerequisites. Modern authors \cite{fkra}, \cite{forster} use sheaf cohomology, the Runge approximation theorem, elliptic regularity for the Laplacian, and rather strong results about the vanishing of the first cohomology group of noncompact surfaces. A more recent proof with analytic flavour appears in Donaldson \cite{don}, again relying on many strong results, including the Riemann-Roch theorem, the topological classification of compact surfaces, Dolbeault cohomology and the Hodge decomposition. In fact, one can hardly find in the literature a self-contained proof of the uniformization theorem of reasonable length and complexity. Our goal here is to give such a minimalistic proof. 

Recall that
a \emph{Riemann surface} is a connected complex manifold of dimension $1$, i.e., a connected 
Hausdorff topological space locally homeomorphic to $\CC$, endowed with a holomorphic atlas.

\begin{theoremut}[Koebe \cite{koebe}, Poincar\'e \cite{poincare}]
Any simply-connected Riemann surface is biholomorphic to either
the complex plane $\mathbb C$, the open unit disk $\mathbb D$, or the Riemann sphere $\hat{\mathbb{C}}$.
\end{theoremut}
A weaker result for domains in $\mathbb{C}$ appeared for the first time in Riemann's thesis \cite{riemann}: 
\begin{theoremrmt}
A simply-connected domain strictly contained in $\mathbb C$ is biholomorphic  to the unit disk.
\end{theoremrmt}
Riemann's proof for his mapping theorem was notoriously imprecise, as none of the necessary tools were yet fully developed at the time; Osgood \cite{osg} is credited with completing the argument, building on the work of many other mathematicians. 

What we show below is the following statement:
\begin{theorem}\label{inj}
Every non-compact simply-connected Riemann surface is biholomorphic to a domain in $\mathbb{C}$.
\end{theorem}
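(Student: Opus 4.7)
I would exhaust $X$ by a nested sequence of simply-connected, relatively compact open subsets $X_1 \Subset X_2 \Subset \cdots$ with $\bigcup_n X_n = X$, fix a basepoint $p_0 \in X_1$ inside a coordinate chart $(U,\varphi)$ with $\varphi(p_0)=0$, and on each $X_n$ produce a biholomorphism $f_n \colon X_n \to \DD$ with $f_n(p_0) = 0$ and $(f_n \circ \varphi^{-1})'(0) > 0$. Then, letting $n \to \infty$, a normal-families argument applied to $(f_n)$ would yield a locally uniform limit $F \colon X \to \overline{\DD}$; Hurwitz's theorem, combined with a uniform lower bound on $(f_n\circ\varphi^{-1})'(0)$ (itself a consequence of a Schwarz-type monotonicity under the inclusions $X_n\subset X_{n+1}$, via applying the Schwarz lemma to $f_n\circ f_{n+1}^{-1}$ restricted to $f_{n+1}(X_n)$), would force $F$ to be non-constant and injective. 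Since $\DD \subset \CC$, this realises $X$ as a domain in $\CC$.

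The topological preliminaries --- second countability of $X$ (Rad\'o) and the existence of an exhaustion with each $X_n$ simply-connected and with piecewise analytic boundary --- I would handle by a direct induction using simple-connectedness: any compact subset can be enlarged to absorb its bounded complementary components (whose closures in $X$ are simply-connected because $X$ is), and the boundary can be smoothed inside coordinate charts.

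The heart of the proof is the construction of the biholomorphism $f_n \colon X_n \to \DD$, which I would split into two steps. First, I would produce at least one non-constant bounded holomorphic function on $X_n$: the Perron method applied to the family of subharmonic functions on $X_n \setminus \{p_0\}$ bounded above by $\log|\varphi|^{-1}$ near $p_0$ and by $0$ near $\partial X_n$ would produce, through the Poisson integral formula on coordinate disks (the tool already stated in the introduction), a harmonic function $g_n$ with the prescribed singular and boundary behaviour; simple-connectedness of $X_n$ would then furnish a harmonic conjugate $h_n$, so that $e^{-(g_n + i h_n)}$ maps $X_n$ holomorphically into $\DD$. Second, I would apply the Koebe extremal argument --- M\"obius transformations of $\DD$ plus the square-root trick, both valid here because $X_n$ is simply-connected --- to maximise $(g \circ \varphi^{-1})'(0)$ within the family of holomorphic $g \colon X_n \to \DD$ with $g(p_0)=0$; the resulting extremal $f_n$ would be injective and surjective by the classical arguments.

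The main obstacle is the Perron construction of $g_n$ on an abstract simply-connected surface: one has to verify that the Perron upper envelope is harmonic at every interior point (by averaging over coordinate disks via Poisson) and that it attains the correct logarithmic singularity at $p_0$ and vanishing values on $\partial X_n$ (this last requires barriers at each boundary point, hence the insistence on regular boundary in the exhaustion step). Once this is in hand, the Koebe extremal step and the final normal-families passage to the limit should run exactly as in the classical Riemann mapping theorem.
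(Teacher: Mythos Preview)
Your architecture matches the paper's---exhaust by simply-connected compact subsurfaces, build a Green function on each via Perron, exponentiate to a holomorphic map into $\DD$, pass to a limit by normal families---but there is a genuine gap in the limiting step. The Schwarz lemma applied to $f_{n+1}\circ f_n^{-1}\colon\DD\to f_{n+1}(X_n)\subset\DD$ shows that $|(f_n\circ\varphi^{-1})'(0)|$ is monotone \emph{decreasing} in $n$; it does not give a positive lower bound. In the parabolic case (take $X=\CC$, $X_n=n\DD$, $f_n(z)=z/n$) the derivatives tend to $0$ and your limit $F$ is the zero constant, so Hurwitz gives nothing. The paper sidesteps this by normalizing the other way around: it fixes $\phi_n'(x_0)=1$ and lets the image be $r_n\DD$ with $r_n$ free, so that the hyperbolic/parabolic dichotomy is absorbed into whether $r_n$ stays bounded or tends to $\infty$; compactness is then supplied by a separate lemma (their Lemma~\ref{lema}) for injective maps on $\DD$ with derivative $1$ at the origin, proved via the square-root trick.

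Two lesser points also need more than a hand-wave. First, the harmonic conjugate: $g_n$ is harmonic on $X_n\setminus\{p_0\}$, which is not simply-connected, and on an abstract surface one cannot yet assert $\pi_1(X_n\setminus\{p_0\})\cong\ZZ$; what is really needed is that every period of $*dg_n$ lies in $2\pi\ZZ$, and the paper obtains this by filling a given loop with a disk map into $X_n$, perturbing via Sard so that $p_0$ is a regular value, and applying Stokes. Second, your Koebe extremal step presupposes at least one \emph{injective} map $X_n\to\DD$ to seed the family, and you have not argued that $e^{-(g_n+ih_n)}$ is injective; the paper proves exactly this by an open/closed argument on the locus of injectivity and then invokes the plane Riemann mapping theorem to reach all of $\DD$.
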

Combined with the Riemann mapping theorem,  
Theorem \ref{inj} is clearly equivalent to the non-compact case of the uniformization theorem.

The initial proofs of the uniformization theorem implicitly assumed the surface to have a countable basis of topology, but this hypothesis was shown to be unnecessary some 20 years later \cite{rado}. Most known proofs start from Riemann's idea of searching for so-called \emph{Green functions}, i.e., harmonic functions with a prescribed logarithmic singularity on sub-surfaces with boundary of a given Riemann surface. Such a Green function will turn out to be the log-norm of a holomorphic map. By exhausting the surface with compact sub-surfaces with boundary, one can find in principle a sequence of embeddings, and compactness results should yield a global embedding into $\mathbb{C}$. Technical complications arise while carrying out this program, and our task is to solve them with the minimum amount of machinery. 

Our paper starts with a self-contained treatment of a few necessary results from Perron's theory \cite{perron}
of subharmonic functions, deducing from there Rad\'o's theorem on the second countability of Riemann surfaces. We follow closely the line of proof of \cite{Hub} of constructing an exhaustion by compact surfaces with boundary by means of Sard's lemma. Hubbard \cite{Hub} proves the \emph{a priori} stronger statement that every non-compact Riemann surface with trivial first Betti number is biholomorphic to either $\mathbb{C}$ or the disk. 
One technical issue in \cite{Hub} is constructing the harmonic dual of the Green function and showing its continuity to the boundary, a particular case of the Osgood-Carath\'eodory theorem \cite{caratheod}, which he uses without proof. He must also rely on Koebe's $1/4$ theorem, further complicating the argument. 

The novelty in our approach is to go around these problems by yet another application of Sard's lemma, together with the standard Riemann mapping theorem. We also give a direct proof of the removal of singularities property for bounded harmonic functions, as we were unable to find a clear reference in the literature. Besides these new arguments, we also give full proofs of the necessary auxiliary results in the most economical way possible. 

Our prerequisites are exclusively contained in the standard undergraduate curriculum:
\begin{itemize}
\item the Schwarz lemma, Montel's theorem on normal families of holomorphic functions, and Riemann's mapping theorem for simply-connected plane domains; we refer to \cite{stein} or \cite{cartan};
\item the Poisson integral formula on the unit disk (see \cite[Ch.\ 22]{forster});
\item partitions of unity associated to open covers of second-countable manifolds, integration of $1$-forms along a smooth path, and Stokes formula in the plane; see \cite{tu};
\item Sard's lemma; see for instance \cite{milnor};
\item the Tietze extension theorem, the universal cover and the fundamental group $\pi_1$; see \cite{mccleary}.
\end{itemize}

What we \emph{do not} use here includes: the Riemann-Roch theorem, the $\partial\overline{\partial}$ lemma, sheaves and their cohomology, singular and de Rham cohomology and their isomorphism, Runge theory, Weierstrass' results on the existence of meromorphic functions, Koebe-Bieberbach's $1/4$ theorem, the existence of triangulations on surfaces, or the topological classification of compact surfaces. We hope that in this way, the uniformization theorem becomes accessible to a general audience, including not only students but also working mathematicians wishing to gain access to an elementary, short, and self-contained proof. 

The proof of the uniformization theorem is useful for the study of moduli spaces of hyperbolic metrics of families of non-compact, non simply-connected Riemann surfaces with prescribed geometry near infinity \cite{punctured_surfaces}, \cite{ricci_flow}. We believe that our approach can be similarly applied to the Schottky uniformization of compact Riemann surfaces, this will be carried out elsewhere.

The reader is referred to  \cite{PPP} for a comprehensive overview of problems in Riemann surface theory leading to the uniformization theorem, and to the collective work \cite{SG} for a historical account of the theorem and an outline of the proof.

\subsection*{Acknowledgements}
We would like to thank Sergiu Moroianu for useful discussions. The authors were
partially supported from the project PN-III-P4-ID-PCE-2020-0794 and from a Bitdefender Junior Scholarship.

\section{Local behaviour of holomorphic maps}\label{localbeh}
We will repeatedly use the local description of holomorphic maps as monomials in suitable charts.
Let $f:X \longrightarrow Y$ be a non-constant 
holomorphic map between Riemann surfaces.
Let $x_0\in X$. Choose charts $(\eta,U)$, $(\psi,V)$ 
centered at $x_0$ and $y_0=f(x_0)$ respectively. Then
$\psi \circ f \circ \eta^{-1}(z)=z^k u(z)$ for some $k\in\mathbb{N}$ and some holomorphic function $u$, with $u(0)=c\neq 0$. Choose $z_0\neq 0$. Let $v(z)=
e^{\frac{1}{k}\int_{z_0}^z\frac{du}{u}}$. Then $v(z)^k=\frac{u(z)}{u(z_0)}$. In the new chart on $X$ defined by
$\varphi(p)=\eta(p)v(\eta(p))$, 
we have 
$\psi \circ f \circ \varphi^{-1}(z)=cz^k$. In particular, it follows that non-constant holomorphic maps are \emph{open}: they map open sets in $X$ 
to open sets in $Y$.

\section{Perron's method} 

A \emph{ disk centered at $x$} in a Riemann surface $X$ is the preimage of the unit disk through a holomorphic chart $\varphi:U\longrightarrow \CC$ mapping $x$ to $0$, with $\overline{\DD}\subset \varphi(U)$.

\begin{definition}
Let $X$ be a Riemann surface. A continuous function $u:X\longrightarrow \RR$ is called
\emph{subharmonic}, respectively \emph{harmonic}, if for every disk in $X$, the real number:
\[
\frac{1}{2\pi}\int_{0}^{2\pi}u(e^{it})dt -u(0)
\]
is non-negative, respectively zero.
\end{definition}

\begin{theorempit}\label{dirichlet}
Every continuous function $f:S^1\longrightarrow \mathbb{R}$ admits a unique continuous extension $u:\overline{\mathbb{D}}\longrightarrow \mathbb{R}$ which is  harmonic on $\DD$. Moreover, on $\mathbb{D}$, $u$ is
given by the formula:
\[
u(z)= \frac{1}{2\pi} \Re \left( \int_{0}^{2\pi} f( e^{i\theta} ) 
\frac{e^{i\theta}+z}{e^{i\theta}-z}  d\theta \right) =\frac{1}{2\pi}
\int_{0}^{2\pi} f( e^{i\theta} ) \frac{1-|z|^2}{|e^{i\theta}-z|^2} d\theta.
\] 
\end{theorempit}
The proof is elementary, see for instance \cite[Theorem 22.3]{forster}. Since $\frac{e^{i\theta}+z}{e^{i\theta}-z}$ is holomorphic in $z$, every harmonic function is the real part of a holomorphic function, hence it is real analytic and satisfies the Laplace equation $\Delta u=0$. Conversely,  by Cauchy's integral formula, the real part of any holomorphic function is harmonic, so in particular $\log |\xi|=\Re \left( \log \xi \right)$ is harmonic.

\begin{proposition} \label{defechiv}
Let $u$ be a continuous function on a Riemann surface $X$. The following
definitions are equivalent:
\begin{itemize}
\item[D1: ] $u$ is subharmonic;
\item[D2: ] \emph{(The maximum principle)} For any harmonic function $h$ on any open connected subset $U\subset X$, either $u+h$ is constant on $U$, or it does not attain its supremum;
\item[D3: ]For every disk $D\subset X$ one has $u \leq u^{(D)}$, where $u^{(D)}$ is the unique continuous function
defined by modifying $u$ inside the disk by using the Poisson integral formula:
\begin{equation}\label{ud}
u^{(D)}(x)= 
\begin{cases}
u(x) &  x \in X\setminus D;\\          
        \text{harmonic} & x\in D.
\end{cases}
\end{equation}
\end{itemize}
\end{proposition}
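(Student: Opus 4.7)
The plan is to run the circular implications D3 $\Rightarrow$ D1 $\Rightarrow$ D2 $\Rightarrow$ D3. The two ``easy'' implications are driven by the mean value property for harmonic functions, which is the special case of the Poisson integral formula at $z=0$.

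For D3 $\Rightarrow$ D1, given a disk $D$ centered at a point $x$, I would observe that the Poisson formula yields $u^{(D)}(x) = \frac{1}{2\pi}\int_0^{2\pi} u^{(D)}(e^{it})\,dt$, and since $u^{(D)} = u$ on $\partial D$, this equals $\frac{1}{2\pi}\int_0^{2\pi} u(e^{it})\,dt$; combined with $u(x) \leq u^{(D)}(x)$ from D3, this is exactly D1. For D2 $\Rightarrow$ D3, I would apply D2 on the open disk $D$ to $v := u - u^{(D)}$ (with $h := -u^{(D)}$, harmonic on $D$). Since $v$ is continuous on $\overline{D}$ and vanishes on $\partial D$, compactness forces $v$ to attain its supremum on $\overline{D}$: if in $D$, then D2 forces $v$ constant on $D$ and the boundary values give $v \equiv 0$; if on $\partial D$, then $\sup v = 0$. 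Either way $v \leq 0$, which is D3.

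The core step is D1 $\Rightarrow$ D2. The key preliminary observation is that any harmonic function $h$ satisfies D1 with equality (again the mean value property), so $u+h$ inherits D1 on $U$. Assuming $u+h$ attains its supremum $M$ at some point of $U$, I would set $F := \{q \in U : (u+h)(q) = M\}$ and show $F$ is clopen in $U$, so by connectedness $F = U$. Closedness is immediate from continuity. For openness, fix $p \in F$ and choose a chart $\varphi$ at $p$ with $\varphi(p) = 0$ and $\overline{\DD} \subset \varphi(U_p)$ for some open neighborhood $U_p$ of $p$. For any $r \in (0, 1]$, the rescaled chart $\varphi/r$ is admissible by the definition of a disk, and the associated disk is $\varphi^{-1}(r\DD)$, with boundary circle $\varphi^{-1}(\{|z|=r\})$. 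Applying D1 to this disk gives
\[
M = (u+h)(p) \;\leq\; \frac{1}{2\pi}\int_0^{2\pi}(u+h)\bigl(\varphi^{-1}(re^{it})\bigr)\,dt \;\leq\; M,
\]
so the integrand, being continuous and bounded above by $M$, must equal $M$ everywhere on the circle. Varying $r \in (0,1]$ covers the chart neighborhood $\varphi^{-1}(\DD)$, which is thus contained in $F$.

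I expect the only subtle point to be ensuring that D1, stated only for the ``unit-sized'' disk attached to a chart, yields the circle-mean inequality at every sufficiently small scale around any point. This is resolved by noting that rescaling the chart $\varphi$ to $\varphi/r$ turns any concentric sub-disk of radius $r \leq 1$ into an admissible disk. Once this is in place, the other two implications are essentially immediate consequences of the Poisson formula and compactness of $\overline{D}$.
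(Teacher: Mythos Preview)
Your argument is correct: the cycle D3 $\Rightarrow$ D1 $\Rightarrow$ D2 $\Rightarrow$ D3 works exactly as you describe, and the point you flag as subtle---that rescaling the chart $\varphi$ to $\varphi/r$ turns every concentric sub-disk into an admissible disk in the sense of the paper's definition---is precisely what makes the openness step in D1 $\Rightarrow$ D2 go through. The paper itself does not give a proof here; it simply states ``We leave to the reader the (immediate) proof of this proposition,'' so there is nothing to compare against, and your write-up is a clean way to discharge that exercise.
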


We leave to the reader the (immediate) proof of this proposition. Remark that by the maximum principle, a non-constant subharmonic function cannot attain its supremum. Thus, the restriction of a subharmonic function to a compact set attains its maximum on the boundary.

\begin{remark}\label{sublocal}
Definition D2 is local: a function
which is subharmonic in the sense of D2 on a neighbourhood of every point is globally subharmonic.
We deduce that
the function $u^{(D)}$ constructed in \eqref{ud} is subharmonic on $X$. 
Indeed, $u^{(D)}$ is clearly subharmonic on $D$ and on $X \setminus
\overline {D}$.
Let $p$ be a point in $\partial D$. For every disk centered at $p$ we get:
\[
u^{(D)}(p)=u(p)\leq \frac{1}{2\pi} \int_0^{2\pi} u(e^{i\theta})d\theta \leq
\frac{1}{2\pi} \int_0^{2\pi} u^{(D)}(e^{i\theta})d\theta.
\]
Hence $u^{(D)}$ is subharmonic in a neighbourhood of $p$ as well, therefore  subharmonic on $X$.
\end{remark}

\begin{definition}
Let X be a Riemann surface. A set $\mathcal F$ of subharmonic functions on X is
called a \emph{Perron family} if the following two properties are verified:
\begin{itemize}
\item If $f,g \in \mathcal F$, then $\max(f,g) \in \mathcal F$.
\item If $f \in \mathcal F$, $D$ is a disk, and
$f^{(D)}$ is the subharmonic function defined by \eqref{ud}, then $f^{(D)} \in \mathcal F$.  
\end{itemize}
\end{definition}

\begin{theorem}\label{perron}
Let $X$ be a (possibly not second countable) Riemann surface.
\begin{itemize}
\item \emph{(Perron's principle)}
If $\mathcal F$ is a non-empty, locally bounded above Perron family 
on $X$, then $\sup_{f \in \mathcal F} f$ is harmonic. 
\item \emph{(Dirichlet's principle)} Let $Y \subset X$ be a sub-manifold
with (possibly non-compact) boundary and $m,M\in \RR$. If $f:\partial Y \longrightarrow [m,M]$ 
is a bounded continuous
function, then there exists a continuous function $\tilde{f}:Y \longrightarrow [m,M]$ which
extends $f$, and is harmonic on $\accentset{\circ}{Y}$.  
\end{itemize}
\end{theorem}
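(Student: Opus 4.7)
Fix $x_0\in X$ and a disk $D$ centered at $x_0$; since harmonicity is a local property, it suffices to prove that $u:=\sup_{f\in\mathcal{F}}f$ is harmonic on $D$. Pick a sequence $f_n\in\mathcal{F}$ with $f_n(x_0)\to u(x_0)$ and, exploiting both Perron-family axioms, replace it by $h_n:=\bigl(\max(f_1,\dots,f_n)\bigr)^{(D)}\in\mathcal{F}$: these are pointwise increasing, harmonic on $D$, and still satisfy $h_n(x_0)\to u(x_0)$. Since the boundary values $h_n|_{\partial D}$ are increasing and locally bounded, monotone convergence inside the Poisson integral shows that $h_n\to h$ pointwise on $D$ for some harmonic $h$, with $h(x_0)=u(x_0)$ and $h\le u$ throughout $D$. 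If one had $h(x_1)<u(x_1)$ at some $x_1\in D$, choosing $g\in\mathcal{F}$ with $g(x_1)>h(x_1)$ and running the same construction on $\max(h_n,g)$ would yield a harmonic $h'\ge h$ on $D$ with $h'(x_0)=h(x_0)$ but $h'(x_1)>h(x_1)$; the harmonic function $h'-h\ge 0$ then vanishes at the interior point $x_0$, so by the minimum principle it is identically zero, a contradiction.

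\textbf{Dirichlet's principle.} Define $\mathcal{F}:=\{v\in C(Y):\ v\le M,\ v \text{ subharmonic on } \accentset{\circ}{Y},\ v|_{\partial Y}\le f\}$. The constant $m$ lies in $\mathcal{F}$, both closure axioms are immediate (Poisson modification on a disk $D\subset\accentset{\circ}{Y}$ leaves $\partial Y$ untouched and preserves the bound $M$), and $\mathcal{F}$ is bounded above by $M$. Perron's principle then gives $\tilde f:=\sup\mathcal{F}$ harmonic on $\accentset{\circ}{Y}$ with $m\le\tilde f\le M$.

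The hard part is continuity of $\tilde f$ at each $p\in\partial Y$. In a chart around $p$ mapping $Y$ to the half-disk $\{|z|<1,\ \im z\ge 0\}$ and $p$ to $0$, the closed disk in the lower half-plane of radius $r$ centered at $c=-ir$ is tangent to $\partial Y$ at $p$ from outside $Y$, so $\beta(z):=\log|z-c|-\log r$ is a harmonic \emph{barrier} with $\beta(p)=0$ and $\beta>0$ on $Y\setminus\{p\}$ near $p$. Given $\epsilon>0$, choose $\delta>0$ with $|f-f(p)|<\epsilon$ on $\partial Y\cap\overline{B_\delta(p)}$, and then $C>0$ with $C\beta\ge M-m$ on $Y\cap\partial B_\delta(p)$. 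On the topological boundary of $B_\delta(p)\cap Y$, every $v\in\mathcal{F}$ is dominated by $f(p)+\epsilon+C\beta$, so by the maximum principle (D2) the same inequality persists inside; taking the supremum yields $\limsup_{q\to p}\tilde f(q)\le f(p)+\epsilon$. Symmetrically, the function $\max(m,\,f(p)-\epsilon-C\beta)$ on $B_\delta(p)\cap Y$, extended by $m$ elsewhere, belongs to $\mathcal{F}$ by a direct check of the defining conditions, and gives $\liminf_{q\to p}\tilde f(q)\ge f(p)-\epsilon$. Letting $\epsilon\to 0$ finishes the proof; the main obstacle throughout is the correct choice of the barrier and the verification that all comparison inequalities survive in an arbitrary smooth chart at $p$.
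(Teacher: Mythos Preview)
Your Perron argument is correct and is the classical ``two-point'' variant: approximate $u$ at a single center $x_0$, and rule out $h<u$ elsewhere by the strong minimum principle for the nonnegative harmonic function $h'-h$. The paper takes the other standard route: it fixes a countable dense set $\{z_j\}\subset D$, builds a single increasing sequence $h_n\in\mathcal F$ (harmonic on $D$) hitting $u$ at every $z_j$, and concludes $w:=\lim h_n=u$ by density and continuity. Your version is slightly shorter once D2 (the maximum principle) is available; the paper's version never singles out a special point and does not invoke the minimum principle.

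For Dirichlet's principle your outline matches the paper's, but the barrier step has a real gap. You choose a chart sending $Y$ to the half-disk $\{\im z\ge 0\}$, i.e.\ a chart that straightens $\partial Y$ to the real axis; such a chart is in general only smooth, not holomorphic (the boundary is only assumed $C^\infty$, not real-analytic). In that chart the function $\beta(z)=\log|z-c|-\log r$ is \emph{not} harmonic for the complex structure of $X$, so neither the $\limsup$ comparison via D2 nor the membership of $\max(m,\,f(p)-\epsilon-C\beta)$ in $\mathcal F$ is justified. Your last sentence concedes exactly this difficulty without resolving it.

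The paper avoids the problem by never straightening the boundary: it works in a \emph{holomorphic} coordinate $\xi$ centered at $p$, where $\partial Y$ is merely some smooth curve through $0$. Smoothness of $\partial Y$ guarantees an exterior tangent disk, centered at a point $p'$ on the outward normal, and the barrier $\log\frac{|p'|}{|\xi-p'|}$ is then genuinely harmonic on $X$. With this choice of $\beta$ (and the scaling constant played by the radius ratio $t$ in the paper rather than your $C$), your upper and lower comparison functions and the extension-by-$m$ trick go through exactly as you wrote them.
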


We prove this theorem in Appendix \ref{AA1} and \ref{AA2}. As a corollary we get:
\begin{theorem}[Rad\'o's theorem] \label{rado}
Every Riemann surface $X$ is second countable.
\end{theorem}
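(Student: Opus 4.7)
The strategy is to produce a continuous Riemannian metric on $X$; a connected topological manifold admitting such a metric is metrizable via its length metric, hence paracompact, and paracompactness for a connected manifold is equivalent to $\sigma$-compactness and to second countability. The metric will be assembled from a non-constant bounded harmonic function supplied by Dirichlet's principle.

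First, to obtain the harmonic function, fix a coordinate disk $(U,\varphi)$ with $\varphi:U\to\DD$ and $\overline{\DD}\subset\varphi(U)$, and set $V=\varphi^{-1}(\{|z|\le 1/2\})$. Applying Dirichlet's principle from Theorem \ref{perron} to the sub-manifold $Y=X\setminus\accentset{\circ}{V}$, with the non-constant continuous boundary data $g\bigl(\varphi^{-1}(\tfrac{1}{2}e^{i\theta})\bigr)=\cos\theta$ on $\partial V$, yields a continuous extension $\tilde g:Y\to[-1,1]$ that is harmonic on $\accentset{\circ}{Y}$. Gluing $\tilde g$ with $\Re(2\varphi)|_V$ (the two values agree on $\partial V$) produces a continuous, non-constant function $h:X\to[-1,1]$ that is harmonic on $X\setminus\partial V$.

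Next, on $X\setminus\partial V$ the real-analyticity of $h$ makes the symmetric tensor $g_1=(dh)^{\otimes 2}+(\star dh)^{\otimes 2}$ (with $\star$ the Hodge star induced by the complex structure) continuous and positive semi-definite, vanishing exactly on the discrete critical set $C=\{dh=0\}$, which is locally the zero locus of the holomorphic derivative $\partial f/\partial z$ for $h=\Re f$. To upgrade $g_1$ to a continuous positive-definite metric $g$ on all of $X$, I would modify it by local bump functions: around each critical point of $h$ take a small coordinate disk and interpolate between $g_1$ and the chart-induced flat metric, and perform the analogous construction in a tubular neighborhood of the compact smooth circle $\partial V$ inside the chart $U$. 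Because $C$ is discrete and $\partial V$ is compact, these modifications can be arranged in pairwise disjoint open sets, so no global partition of unity is invoked. The length metric $d(x,y)=\inf_\gamma\int_\gamma\sqrt{g(\dot\gamma,\dot\gamma)}\,dt$ then induces the original manifold topology on $X$, making $X$ metrizable.

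The main subtlety lies in the middle step: one cannot glue the metric via a global partition of unity, since that tool presupposes the very second countability we are trying to prove. The argument must therefore exploit that the degeneracy locus of $g_1$ sits in pairwise separable pieces (a single compact smooth circle together with a discrete, locally finite set of critical points) on which local, independent modifications are sufficient.
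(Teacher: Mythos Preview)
Your route is genuinely different from the paper's. The paper removes two disjoint closed disks, solves the Dirichlet problem on the complement with boundary values $0$ and $1$, and then uses the resulting harmonic function $g$ only to produce the holomorphic $1$-form $\partial g$; integrating $\partial g$ along paths gives a non-constant holomorphic function on the universal cover of $\accentset{\circ}{Y}$, and a short Poincar\'e--Volterra argument (Appendix~\ref{AB}) shows that any Riemann surface admitting a non-constant holomorphic map to $\mathbb C$ is second countable. No Riemannian metric is ever constructed.

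Your metric construction, however, has a genuine gap precisely at the step you yourself flag. You assert that because $C=\{dh=0\}$ is discrete, the bump-function modifications around its points ``can be arranged in pairwise disjoint open sets.'' But separating an arbitrary closed discrete subset of a Hausdorff manifold by pairwise disjoint open neighbourhoods is the \emph{collectionwise Hausdorff} property; it does not follow from discreteness alone, and for non-paracompact manifolds it is exactly the kind of statement bound up with the metrizability you are trying to prove. A~priori $C$ could be uncountable, and there is no evident mechanism---short of a global metric or partition of unity, both of which you have correctly excluded---for shrinking the coordinate disks to be mutually disjoint or even locally finite. The circularity you avoided for partitions of unity resurfaces here. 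A clean repair that preserves your overall plan is simply not to modify $g_1$ near $C$ at all: in a local coordinate at a critical point one has $g_1=|z|^{2k}|u(z)|^2\,|dz|^2$ with $u(0)\neq 0$, and the associated length functional is still a genuine distance inducing the manifold topology (an elementary local estimate). Combined with your single compactly supported interpolation near $\partial V$ inside the chart $U$, this already yields a metric on $X$. Alternatively, once you have the non-trivial holomorphic $1$-form $\partial h$ on $X\setminus\overline V$, you are essentially in the paper's situation and can finish via Appendix~\ref{AB}.
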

\begin{proof}
Let $\overline{D},\overline{D'}$ be two disjoint
closed disks in $X$. Consider $Y=X \setminus \left(
{D} \cup {D'} \right)$. By Dirichlet's principle 
(which does not require second countability), 
there exists a continuous 
function $g:Y\to \mathbb R$, harmonic on $\accentset{\circ}{Y}$ with $g_{\vert_{\partial D}}\equiv 0$, and 
$g_{\vert_{\partial D'}}\equiv 1$.
Since $g$ is harmonic, the $1$-form $\partial g=\frac{1}{2}\left(
\frac{\partial g}{\partial x}-i\frac{\partial g}{\partial y} \right)dz $ 
is holomorphic. Fix $y_0 \in \accentset{\circ}{Y}$. For every path $\gamma:[0,1] \longrightarrow \accentset{\circ}{Y}$
with $\gamma(0)=y_0$, let $G(\gamma)=\int_{\gamma} \partial g$. Then $G$ is a well-defined, non-constant, holomorphic
function on the universal cover of $\accentset{\circ}{Y}$, which by Appendix \ref{AB} must be second countable. This implies
immediately that both $\accentset{\circ}{Y}$ and $X$ are second countable.
\end{proof}

\section{Green functions}

\begin{lemma}\label{rsh}
Every harmonic function $f:\mathbb{D}^{*}\longrightarrow \mathbb{R}$ bounded
near $0$ extends continuously to a harmonic function on
$\mathbb{D}$.
\end{lemma}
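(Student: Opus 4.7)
The plan is to show that $f$ agrees on a punctured disk with the harmonic function produced by solving the Dirichlet problem on a full smaller disk, the discrepancy being controlled by the logarithmic barrier $\log(1/|z|)$.

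Fix $0 < r < 1$. The restriction $f|_{\{|z|=r\}}$ is continuous, so by the Poisson integral formula applied to the disk $\mathbb{D}_r = \{|z|<r\}$ we obtain a continuous function $g : \overline{\mathbb{D}_r} \to \RR$ which is harmonic on $\mathbb{D}_r$ and satisfies $g=f$ on $\{|z|=r\}$. Set $h = f - g$ on $\mathbb{D}_r \setminus \{0\}$. Then $h$ is harmonic there, continuous up to $\partial \mathbb{D}_r$ with $h=0$ on $\{|z|=r\}$, and bounded: $g$ is bounded on $\overline{\mathbb{D}_r}$ by compactness and $f$ is bounded near $0$ by hypothesis. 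Let $M = \sup |h|$.

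The key auxiliary function is the harmonic barrier $\phi_\varepsilon(z) = \varepsilon \log(r/|z|)$, defined on $\mathbb{D}_r \setminus\{0\}$, equal to $0$ on $\{|z|=r\}$, and tending to $+\infty$ at the origin. For any $\varepsilon > 0$, pick $\delta > 0$ so small that $\varepsilon \log(r/\delta) \geq M$. On the closed annulus $A_\delta = \{\delta \leq |z| \leq r\}$, the function $h - \phi_\varepsilon$ is harmonic in the interior and continuous up to the boundary, with $h - \phi_\varepsilon = 0$ on $\{|z|=r\}$ and $h - \phi_\varepsilon \leq M - \varepsilon\log(r/\delta) \leq 0$ on $\{|z|=\delta\}$. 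The maximum principle (definition D2) yields $h \leq \phi_\varepsilon$ throughout $A_\delta$. Given $z$ with $0 < |z| < r$, this applies for every $\delta < |z|$, so $h(z) \leq \varepsilon \log(r/|z|)$; letting $\varepsilon \to 0$ gives $h(z) \leq 0$. The same argument applied to $-h$ gives $h(z) \geq 0$, so $h \equiv 0$ and $f = g$ on $\mathbb{D}_r \setminus \{0\}$.

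Consequently, defining $f(0) := g(0)$ extends $f$ continuously to all of $\mathbb{D}_r$, with the extension harmonic there (being equal to $g$). The extension agrees with the original $f$ on $\mathbb{D}^*$, which is already harmonic, so the resulting function is harmonic on all of $\mathbb{D}$. The main obstacle in this argument is not conceptual but one of bookkeeping: the maximum principle must be applied on an annulus rather than a disk, which is why the barrier $\phi_\varepsilon$, blowing up at the puncture, is needed in order to dominate $h$ near $0$ uniformly and then be eliminated by letting $\varepsilon \to 0$.
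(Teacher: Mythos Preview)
Your proof is correct and takes a genuinely different route from the paper's. The paper argues complex-analytically: it constructs a (multi-valued) harmonic conjugate $g$ of $f$ on $\DD^*$, exponentiates to get a well-defined bounded holomorphic function $e^{c(f+ig)}$ on $\DD^*$, and then invokes Riemann's removable singularity theorem for holomorphic functions to extend across $0$; taking $\Re\log$ recovers the harmonic extension of $f$. Your argument stays entirely within real harmonic-function theory: solve the Dirichlet problem on a subdisk via the Poisson integral, and use the logarithmic barrier $\varepsilon\log(r/|z|)$ together with the maximum principle on shrinking annuli to force the difference to vanish. Your approach is more self-contained within the paper's subharmonic framework (it uses only the Poisson formula and D2, both already on the table), whereas the paper's approach reduces the problem to the holomorphic removable-singularity theorem, which is standard but an external prerequisite. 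One small cosmetic point: when you assert that $h$ is bounded, you implicitly use that $f$ is bounded on all of $\overline{\DD_r}\setminus\{0\}$, not just near $0$; this follows since $f$ is continuous on the compact annulus $\{\rho\le|z|\le r\}$ for any $\rho>0$ and bounded on $\{0<|z|<\rho\}$ by hypothesis, but it is worth saying so explicitly.
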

\begin{proof}
Recall that from the Poisson integral formula $f$ is smooth, so we can define the 1-form $\beta=\frac{\partial f}{\partial x}dy-\frac{\partial f}{\partial y}dx$. Since $f$ is harmonic, it
follows that $\beta$ is closed. Choose $z_0\in \mathbb D^*$. 
The multi-valued map 
\begin{align*}
&g: \mathbb D^* \longrightarrow \mathbb R; &g(z)=\int_{z_0}^{z}  \beta,
\end{align*}  
defined by integration along a path from $z_0$ to $z$, depends
only on the homotopy class of the chosen path. 
Let $\gamma$ be the circle centered at $0$ passing through $z_0$. It is a generator of $\pi_1(\mathbb D^*)$, so
the map $g$ is well-defined modulo $a\mathbb Z$, where $a=\int_{\gamma} \beta$. Hence $f+ig$ is well-defined modulo $ia \mathbb Z$. We claim
that $f+ig$ satisfies the Cauchy-Riemann conditions. Indeed,
\begin{align*}
\frac{\partial g}{\partial y}(z)= \left. \frac{d}{dt} \right|_{t=0}
\int_{z}^{z+it}
\left( \frac{\partial f}{\partial x}dy-\frac{\partial f}{\partial y}dx  \right)
= \left. \frac{d}{dt} \right|_{t=0} \int_{0}^{t} \frac{\partial f}{\partial
x} (z+i \theta) d\theta= \frac{\partial f}{\partial x} (z). 
\end{align*}
The other condition is verified similarly. If $a=0$, $e^{f+ig}$ is a
well-defined holomorphic function on $\mathbb
D^*$. Moreover, $e^{f+ig}$ is bounded near 0, thus its singularity in $0$ is removable. Since $\mathbb D$ is simply-connected and $\left| e^{f+ig}
\right|=e^{f} >0$, we deduce that $f=\Re(\log e^{f+ig})$ extends
harmonically on $\DD$. Otherwise, if $a\neq 0$, we apply the same argument to the function $e^{\frac{2 \pi }{a}(f+ig)}$.
\end{proof}

\begin{proposition}\label{green}
Let $K\subset X$ be a (not necesarily compact) sub-surface with smooth boundary and fix $x_0\in \accentset{\circ}{K}$. Then there exists a continuous function $G:K \setminus
\lbrace x_0 \rbrace \longrightarrow [0,\infty)$ which is harmonic on the
interior of $K \setminus \lbrace x_0 \rbrace$, vanishes on $\partial K$, and
has a logarithmic pole in $x_0$ (i.e., $G+ \log|\xi|$ is harmonic on a
neighbourhood of $0$, where $\xi$ is a local holomorphic coordinate centered at $x_0$).
\end{proposition}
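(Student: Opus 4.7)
The approach is Perron's method on $\accentset{\circ}{K}\setminus\{x_0\}$. Fix a holomorphic chart $\xi$ centered at $x_0$ with $\{|\xi|\le 1\}\subset\accentset{\circ}{K}$, write $D_r=\{|\xi|<r\}$, and consider the family
\[
\mathcal{F}=\{v\in C(\accentset{\circ}{K}\setminus\{x_0\})\text{ subharmonic}:\ \limsup_{y\to\partial K}v(y)\le 0,\ \limsup_{y\to x_0}(v(y)+\log|\xi(y)|)<\infty\}.
\]
The candidate Green function is the Perron envelope $G:=\sup_{v\in\mathcal{F}}v$.

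First I would check the Perron-family properties. Non-emptiness is witnessed by the function $v_0$ equal to $-\log|\xi|$ on $\{|\xi|\le 1\}$ and to $0$ elsewhere: it is continuous and subharmonic (being $\max(-\log|\xi|,0)$ inside the chart, and glued by Remark \ref{sublocal}), and both asymptotic conditions are trivial. Closure under binary maxima is immediate. Closure under the disk modification $v\mapsto v^{(D)}$ for a disk $D\subset\accentset{\circ}{K}\setminus\{x_0\}$ follows from Remark \ref{sublocal}, because $\overline{D}$ is automatically separated from both $x_0$ and $\partial K$, leaving both asymptotic conditions unaffected. The key technical step is local boundedness of $\mathcal{F}$: for $v\in\mathcal{F}$, an adaptation of the argument in Lemma \ref{rsh} to the subharmonic case extends $v+\log|\xi|$ subharmonically to $D_1$, so the maximum principle gives $v(y)+\log|\xi(y)|\le\sup_{\partial D_1}v$ on $D_1$; on $K\setminus D_1$, Dirichlet's principle (Theorem \ref{perron}) provides the harmonic measure $\omega\colon K\setminus D_1\to[0,1]$ with $\omega|_{\partial K}=0$ and $\omega|_{\partial D_1}=1$, and the maximum principle gives $v\le\omega\cdot\sup_{\partial D_1}v$ on $K\setminus D_1$; a uniform a priori bound on $\sup_{\partial D_1}v$ across $v\in\mathcal{F}$ then supplies the required local boundedness, and Perron's principle (Theorem \ref{perron}) forces $G$ to be harmonic on $\accentset{\circ}{K}\setminus\{x_0\}$.

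It remains to verify the boundary behaviour of $G$. Continuity up to $\partial K$ with value $0$ is obtained from local barriers: at each $p\in\partial K$, smoothness of $\partial K$ gives a half-disk chart in which standard harmonic barrier functions, combined with the Dirichlet majorant above, squeeze $G(y)\to 0$ as $y\to p$. At $x_0$, the function $G+\log|\xi|$ is harmonic on $D_1\setminus\{x_0\}$; it is bounded above by the local-boundedness estimate and bounded below by $0$ since $G\ge v_0=-\log|\xi|$ on $D_1$. Lemma \ref{rsh} then extends it harmonically across $x_0$, yielding the logarithmic singularity. Positivity $G\ge 0$ follows from $0\in\mathcal{F}$.

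The main obstacle is the uniform a priori bound on $\sup_{\partial D_1}v$ across $v\in\mathcal{F}$: without it the Perron envelope could be identically $+\infty$. Obtaining it exploits both the presence of the smooth boundary $\partial K$ and the Dirichlet solutions provided by Theorem \ref{perron}, and should be carried out by comparing an arbitrary $v\in\mathcal{F}$ against a fixed global harmonic majorant.
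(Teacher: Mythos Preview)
Your Perron family is too large: the ``main obstacle'' you flag at the end is not a missing estimate but a genuine failure of the approach when $K$ is non-compact. Take $K$ to be the upper half-plane and $x_0=i$; then for every $c>0$ the function $v_c(z)=c\,\im z$ lies in your family $\mathcal{F}$ (it is harmonic, vanishes on $\partial K$, and $v_c+\log|\xi|\to-\infty$ at $x_0$), so $\sup_{v\in\mathcal{F}}v\equiv+\infty$ and Perron's principle does not apply. The inequality $v\le\omega\cdot\sup_{\partial D_1}v$ you invoke on $K\setminus D_1$ is a maximum principle on a non-compact region applied to a function with no a priori bound, and it fails for exactly this reason.

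The paper resolves this by reversing the order of operations. It first uses Dirichlet's principle to produce a harmonic measure $h_1:K\setminus D_{1/2}\to[0,1]$, observes that $a:=\sup_{\partial D}h_1<1$ strictly, and then assembles an explicit subharmonic function $h$ equal to $-Bh_1$ outside $D$ and to $\log|\xi|-A$ near $x_0$, with constants $A,B$ tuned so the pieces glue. The Perron family is then defined \emph{with the constraint $g+h\le 0$ built in}, making local boundedness automatic; the squeeze $0\le G\le -h$ simultaneously delivers continuity of $G$ at $\partial K$ (since $-h=Bh_1\to 0$ there) and the two-sided bound $-\log|\xi|\le G\le A-\log|\xi|$ near $x_0$ needed for Lemma~\ref{rsh}. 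This is precisely the ``fixed global harmonic majorant'' you gesture toward, but it must be constructed \emph{before} defining the family, not extracted from it afterward.

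A secondary gap: the ``adaptation of Lemma~\ref{rsh} to the subharmonic case'' is not an adaptation of that proof, which works by completing a harmonic function to a holomorphic one via the Cauchy--Riemann equations and has no subharmonic analogue. A removable-singularity statement for subharmonic functions bounded above near a point does exist, but it needs a separate argument; the paper's route avoids it entirely.
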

\begin{proof}
Consider a disk $D$ centered at $x_0$ with coordinate $ \xi$, and let $D_{\frac{1}{2}}=\{|\xi|<\frac{1}{2}\} \subset D$. From Dirichlet's principle we can define a continuous map $h_1:K \setminus D_{\frac{1}{2}} \longrightarrow [0,1]$ by:
\[
h_1(\xi)= \begin{cases}
        1 &  |\xi|=\frac{1}{2}  \\
        0 &  \xi \in \partial K \\          
        \text{harmonic,} & \mbox{otherwise.}
\end{cases} \]
By the maximum principle, $a= \sup_{\partial D} h_1 \in (0,1)$ so we can take $A,B > 0$ such that: 
\[
 Ba < A < B- \log 2.
\]
From these inequalities, it follows that the map:
\begin{align*}
h:\overline{D} \setminus D_{\frac{1}{2}}\longrightarrow \mathbb R, &&h(\xi)=\max(-Bh_1(\xi),\log|\xi|-A)
\end{align*}
can be continuously extended to a subharmonic function on $K \setminus \{ x_0\}$, by setting it to be $-Bh_1$ on $K \setminus D$, and $\log|\xi|-A$ on $D_{\frac{1}{2}}$.

Let $\mathcal F$ be the Perron family of those continuous functions $g:K
\setminus \lbrace x_0 \rbrace \longrightarrow [0,\infty)$ which are subharmonic on the interior,
restrict to $0$ on $\partial K$, and satisfy $g+h \leq 0$. The family $\mathcal F$ is bounded above (by $-h$) and contains the map $\alpha: D \setminus \lbrace 0 \rbrace \longrightarrow \mathbb R$,
$\alpha(\xi)=-\log|\xi|$ extended with $0$ outside the unit disk. By Perron's principle, the map $G:K\setminus\{x_0 \}\longrightarrow [0,\infty)$, $G=\sup_{g\in\mathcal{F}} g$ is continuous, vanishes on $\partial K$ and is harmonic on $\accentset{\circ}{K} \setminus \{x_0\}$. Moreover $\alpha \leq G \leq -h$, so $-\log|\xi| \leq G \leq A-\log|\xi|$ on $D_{\frac{1}{2}}$ and therefore $G+\log|\xi|$ is bounded near $0$. Lemma \ref{rsh} shows that $G+ \log|\xi|$ is actually harmonic near $0$, ending the proof.
\end{proof}

\section{A holomorphic embedding in the unit disk} 

\begin{theorem}\label{olomxn}
Let $X$ be a Riemann surface and $K\subset X$ a compact simply-connected sub-surface with precisely one (smooth) boundary component. Then there exists a biholomorphism $\varphi:\accentset{\circ}{K}
\longrightarrow  \DD$.
\end{theorem}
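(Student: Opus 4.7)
The plan is to use Proposition \ref{green} to produce a single-valued holomorphic map $\varphi\colon\accentset{\circ}{K}\to\DD$ that has a simple zero at a chosen point $x_{0}$ and modulus tending to $1$ at $\partial K$, and then to deduce that $\varphi$ is a biholomorphism by counting preimages.

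Fix $x_{0}\in\accentset{\circ}{K}$ and let $G\colon K\setminus\{x_{0}\}\to[0,\infty)$ be the Green function of Proposition \ref{green}, vanishing on $\partial K$ and with $G+\log|\xi|$ harmonic near $x_{0}$. Since $K$ is simply connected and bounded by a single circle it is topologically a closed disk, so $\accentset{\circ}{K}\setminus\{x_{0}\}$ has fundamental group $\ZZ$, generated by a small loop around $x_{0}$. I will form the closed $1$-form $\omega:=-\partial_{y}G\,dx+\partial_{x}G\,dy$ (the Hodge dual of $dG$); a direct computation near $x_{0}$ using $G=-\log|\xi|+\text{harmonic}$ shows that $\omega$ has period $-2\pi$ around $x_{0}$, so all its periods lie in $2\pi\ZZ$. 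Consequently the multi-valued primitive $G^{*}:=\int\omega$ is well defined modulo $2\pi\ZZ$, and $\varphi:=e^{-(G+iG^{*})}$ is a bona fide single-valued holomorphic function on $\accentset{\circ}{K}\setminus\{x_{0}\}$. Writing $G=-\log|\xi|+h$ near $x_{0}$ with $h$ harmonic and $h^{*}$ a local harmonic conjugate of $h$, we obtain $\varphi(\xi)=\xi\cdot e^{-(h+ih^{*})}$, so the singularity at $x_{0}$ is removable and $\varphi(x_{0})=0$ is a simple zero.

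The strong minimum principle forces $G>0$ on $\accentset{\circ}{K}\setminus\{x_{0}\}$, so $|\varphi|=e^{-G}<1$ there; with $\varphi(x_{0})=0$ this gives $\varphi(\accentset{\circ}{K})\subset\DD$. Since $|\varphi|$ extends continuously to $K$ by $1$ on $\partial K$, the set $\{|\varphi|\le r\}$ is a compact subset of $\accentset{\circ}{K}$ for each $r<1$, so $\varphi$ is proper. Being non-constant and open by Section \ref{localbeh}, $\varphi$ has an image which is open in $\DD$; properness makes it closed, hence $\varphi(\accentset{\circ}{K})=\DD$ by connectedness of $\DD$.

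To conclude, I will use the local normal form $z\mapsto cz^{k}$ from Section \ref{localbeh} to attach a multiplicity $\operatorname{mult}_{p}(\varphi)$ at every point and form $\deg_{w}(\varphi):=\sum_{p\in\varphi^{-1}(w)}\operatorname{mult}_{p}(\varphi)$, each fiber being finite by properness and discreteness of zeros of holomorphic maps. A short local argument---disjoint coordinate patches around the preimages, combined with properness to confine $\varphi^{-1}$ of a small disk around $w$ to the union of these patches---shows that $\deg_{w}(\varphi)$ is locally constant in $w$, hence a constant integer on the connected set $\DD$. Evaluating at $w=0$, the only preimage is $x_{0}$ and it is a simple zero, so $\deg(\varphi)=1$. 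A degree-one branched cover has exactly one, unramified, preimage of every value, so $\varphi$ is a bijective local biholomorphism, thus a biholomorphism $\accentset{\circ}{K}\to\DD$. The most delicate steps are the period calculation securing single-valuedness of $\varphi$ and the verification of local constancy of $\deg_{w}(\varphi)$, both elementary once the local form of Section \ref{localbeh} and the boundary behaviour of $G$ from Proposition \ref{green} are in hand.
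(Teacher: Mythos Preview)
Your argument is mathematically sound, but it diverges from the paper in two places, and one of them is precisely the step the authors single out as the crux.

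\textbf{Periods of $\omega$.} You assert that ``$K$ is simply connected and bounded by a single circle, so it is topologically a closed disk, hence $\pi_1(\accentset{\circ}{K}\setminus\{x_0\})\cong\ZZ$.'' This is true, but it amounts to invoking the topological classification of compact orientable surfaces with boundary, which the paper explicitly places on its list of tools \emph{not} to be used. The authors even flag this in a Remark inside the proof: Van Kampen alone only gives that $\pi_1(\accentset{\circ}{K}\setminus\{x_0\})$ is the normal closure of a cyclic subgroup. Their substitute is an application of Sard's lemma: extend an arbitrary loop $\gamma$ to a disk map $h:\overline{\DD}\to\accentset{\circ}{K}$, perturb so that $x_0$ is a regular value, and use Stokes to write $\int_\gamma\omega$ as a finite sum of integrals over small circles around $h^{-1}(x_0)$, each contributing $\pm 2\pi$. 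This is the paper's replacement for the topological classification and is the main novelty of their proof.

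\textbf{Bijectivity.} Here your route is actually cleaner than the paper's. You observe that $|\varphi|=e^{-G}$ extends continuously by $1$ to $\partial K$, so $\varphi:\accentset{\circ}{K}\to\DD$ is proper; together with the local normal form this gives a constant degree, equal to $1$ at $w=0$ since $x_0$ is the unique zero and is simple. Hence $\varphi$ is a biholomorphism onto all of $\DD$. The paper instead proves injectivity by a connectedness argument (the set of points with unique preimage and nonzero derivative is open, closed, and contains $x_0$), obtaining only an embedding $\accentset{\circ}{K}\hookrightarrow\DD$, and then invokes the Riemann mapping theorem to finish. Your degree argument avoids that last appeal.

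In summary: your proof is correct as written, but within the paper's self-imposed prerequisites the assertion that $K$ is a topological disk is an unjustified step; that gap is exactly what the paper's Sard-lemma manoeuvre is designed to fill. Conversely, your properness/degree endgame is a genuine simplification over the paper's injectivity-plus-Riemann-mapping conclusion.
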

\begin{proof}
Let $G$ be the Green function from Proposition \ref{green} and consider the $1$-form given in holomorphic coordinates by $\omega=\frac{\partial G}{\partial x}dy-\frac{\partial G}{\partial y}dx \in \Omega^1(\accentset{\circ}{K})$. One can easily check that it is invariant under holomorphic changes of variables (in fact, this form is just $JdG$, where $J$ is the almost complex structure).  
Fix $z_0$ in $\accentset{\circ}{K} \setminus \lbrace x_0 \rbrace$ and
consider the multi-valued map 
\begin{align*}
&F:\accentset{\circ}{K} \setminus \lbrace x_0 \rbrace \longrightarrow \mathbb R; 
&F(z)= \int_{z_0}^{z} \omega 
\end{align*}
defined by integrating along any smooth path from $z_0$ to $z$. As above,
the value of $F(z)$ depends only on the homotopy class of the chosen path. We claim that $F(z)$ is well-defined modulo $2\pi$.
\begin{remark}
One could try to prove that $\pi_1(\accentset{\circ}{K} \setminus \lbrace x_0 \rbrace)$ is cyclic as an application of Van Kampen's theorem, since
$\accentset{\circ}{K} \setminus \lbrace x_0 \rbrace$ is obtained from $\accentset{\circ}{K}$ by removing a disk. In reality, all we can say is that $\pi_1(\accentset{\circ}{K} \setminus \lbrace x_0 \rbrace)$ equals the normalizer of a cyclic subgroup. We will avoid this problem by using the Sard lemma.
\end{remark}

Take $\gamma:S^1\longrightarrow \accentset{\circ}{K}\setminus\{x_0\}$ a smooth loop with $\gamma(1)=z_0$. Since $\accentset{\circ}{K}$ is simply-connected there exists $h:\overline{\DD}\longrightarrow \accentset{\circ}{K}$, a smooth extension of $\gamma$ to the closed unit disk. 

We can assume $x_0$ to be a regular value for $h$. If not, consider a disk $D$ centered at $x_0$ which does not intersect $\gamma$. By Sard's lemma, there exists a regular value $y_0$ for $h$ inside $D$. Pick $ \psi:\accentset{\circ}{K}\longrightarrow \accentset{\circ}{K}$ a diffeomorphism which maps $y_0$ to $x_0$, and acts as the identity outside $D$. Then $\psi\circ h$ is a new homotopy for which $x_0$ is a regular value. 

Hence $h^{-1}(x_0)$ is a (possibly empty) finite set in the disk $\DD$. By pulling back and using Stokes' theorem, we get:
\[
\int_{\gamma} \omega =\int_{S^1}h^*\omega=\sum_{q\in h^{-1}(x_0)} \int_{\mathcal{C}_q}h^*\omega=\sum_{q\in h^{-1}(x_0)} \int_{h(\mathcal{C}_q)}\omega,
\]
where the $\mathcal{C}_q$'s are disjoint circles centered at $q$, for each $q\in h^{-1}(x_0)$, chosen such that their image through $h$ lies inside a punctured disk $D\setminus\{x_0\}$. Every circle $\mathcal{C}_q$ is homotopic in $D\setminus \{x_0\}$ to the circle $|\xi|=r$, where $\xi$ is a local coordinate in a disk centered at $x_0$ and $r<1$ is a positive number. Therefore the right-hand side is an integer multiple of $\int_{|\xi|=r} \omega$. 
From Proposition
\ref{green}, we know that $H:=G+ \log |\xi|$ is harmonic on
$\{ |\xi| < 1 \}$, hence, using Stokes' theorem:
\begin{align*}
\int_{ |\xi|=r}  \frac{\partial G}{\partial x}dy-\frac{\partial G}{\partial
y}dx &=\int_{|\xi|\leq r} \left( \frac{\partial^2 H}{\partial
x^2}+\frac{\partial^2H}{\partial y^2}\right) dx\wedge dy -\int_{|\xi|=r}
\frac{xdy-ydx}{x^2+y^2}=-2\pi.
\end{align*}
This proves the claim that $F$ is well-defined modulo $2\pi$, therefore
$\varphi=e^{-G-iF}:\accentset{\circ}{K}\setminus \{x_0\}\longrightarrow \DD$ is holomorphic and well-defined. The singularity in $0$ of such a function is removable because $|e^{-G}|=|xi|e^H$ is bounded, so we can extend $\varphi: \accentset{\circ}{K} \longrightarrow \mathbb D$ with $\varphi(x_0)=0$. Clearly,
$\varphi^{-1}(0)=\{x_0\}$.

Let us prove that $\varphi$ is injective. Let $A$
be the set of those points $x \in
\accentset{\circ}{K}$ for which $d\varphi_x \neq 0$ and the preimage $\varphi^{-1}(\varphi(x))$ only contains $x$. First we prove
that $A$ is open. Let $x \in A$. By continuity and using Section \ref{localbeh} we find a disk
$D$ centered at $x$ where $d\varphi \neq 0$ and on which $\varphi$ is injective. Suppose there
existed
sequences $(x_j)_{j \in \mathbb N}$, $(y_j)_{j \in
\mathbb N} \subset \accentset{\circ}{K}$ with $x_j \neq y_j$, $x_j
\rightarrow x$ and
$\varphi(x_j)=\varphi(y_j)$. Since $\varphi$ is injective on
$D$, there exists a rank $j_0$ such that for any $j \geq j_0$,
$x_j \in D$ and $y_j \notin D$. Since
$(y_j)_{j \geq j_0} \subset K \setminus D$,
there exist $ y \in K\setminus D$ and a subsequence $(y_{j_k})_{k \in
\mathbb N}$ such that $\lim_{k\rightarrow \infty} y_{j_k}=y$. If $y\in \accentset{\circ}{K}$ then we have $\varphi(x)=\varphi(y)$, contradicting $x\in A$. Otherwise, if $y\in \partial K$, we have $|\varphi(y_{j_k})|\to e^{-G(y)}=1$ while $|\varphi(x_{j_k})|\to |\varphi(x)|<1$, contradiction.

Now we prove that $A$ is closed in $\accentset{\circ}{K}$. Let $(x_j)_{j \in
\mathbb N} \subset A$, with $x_j \rightarrow p \in \accentset{\circ}{K}$. If
$\varphi'(p)=0$, then again by Section \ref{localbeh}, in a suitable chart $U$ near $p$, the map $\varphi$ is given
by $z\mapsto cz^k$, for some positive integer $k\geq 2$. Let $\zeta\neq 1$ be a $k$-th
root of unity. Then, for $j$ large enough, we have that $\varphi(x_j)=\varphi(\zeta x_j)$, which is a contradiction. Otherwise,
suppose there existed another point $q\in \accentset{\circ}{K}$ with
$\varphi(p)=\varphi(q)$. Since $\varphi$ is an open map, we can find two
disjoint neighborhoods $U\ni p$ and $V\ni q$ with $\varphi(U)=\varphi(V)$.
But this means that, for $j$ large enough, $\varphi^{-1}(x_j)\cap V \neq
\emptyset$, which is again a contradiction. Hence $A$ is closed in
$\accentset{\circ}{K}$.

To conclude that $A=\accentset{\circ}{K}$, we are left to prove that $A\neq
\emptyset$. Note that near $x_0$ we have
$|\varphi(\xi)|=|e^{\log|\xi|-H(\xi)}|=|\xi||e^{-H(\xi)}|$.
Thus $\varphi'(x_0)\neq 0$, so $x_0\in A$.

Since non-constant holomorphic maps are open, the injection $\varphi$ is a biholomorphism onto its image $\varphi(\accentset{\circ}{K})\subset \DD$. 

The Riemann mapping theorem allows us to conclude that $\accentset{\circ}{K}$ is biholomorphic to $\DD$. 
\end{proof}

\section{Exhaustion by compact simply-connected submanifolds}
Let $X$ be a simply-connected, non-compact Riemann surface and $x_0 \in X$ a
fixed point.
\begin{theorem}
There exists a compact exhaustion of $X$
\begin{align*}x_0 \in K_0 \subset K_1 \subset ... \subset X,&&K_n\subset
\accentset{\circ}{K}_{n+1},&&\bigcup_{n\geq 0}K_n=X
\end{align*}
with simply-connected
surfaces with non-empty connected smooth boundary.
\end{theorem}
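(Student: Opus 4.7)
My plan is to construct the $K_n$'s as ``filled-in'' regular sublevel sets of a smooth proper exhaustion function on $X$. Since $X$ is second countable by Rad\'o's Theorem~\ref{rado}, it admits smooth partitions of unity, so one can build a smooth proper function $\rho:X\longrightarrow[0,\infty)$ with $\rho(x_0)=0$: take an exhausting sequence $V_0,V_1,\ldots$ of relatively compact opens with $x_0\in V_0$ and $\overline{V_n}\subset V_{n+1}$, choose $\chi_n\in C^\infty(X,[0,1])$ equal to $1$ on $\overline{V_n}$ and supported in $V_{n+1}$, and set $\rho:=\sum_{n\ge0}(1-\chi_n)$. By Sard's lemma the regular values of $\rho$ are dense in $[0,\infty)$; pick an increasing sequence $c_n\nearrow\infty$ of regular values. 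Each $L_n:=\rho^{-1}([0,c_n])$ is then a compact sub-surface with smooth boundary, with $L_n\subset\accentset{\circ}{L}_{n+1}$ and $\bigcup L_n=X$. Replacing $L_n$ by its connected component containing $x_0$, I may assume each $L_n$ is connected, its smooth boundary being a finite disjoint union of circles.

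Now ``fill'' $L_n$: define $K_n$ to be the union of $L_n$ with every connected component of $X\setminus L_n$ whose closure in $X$ is compact. Since only finitely many components of $X\setminus L_n$ meet the finitely many circles of $\partial L_n$, the filling adjoins at most finitely many compact pieces, producing a compact connected sub-surface $K_n$ whose smooth boundary consists exactly of those circles of $\partial L_n$ bordering a \emph{non-compact} component of $X\setminus L_n$. By construction, every connected component of $X\setminus K_n$ has non-compact closure in $X$. Inclusion-monotonicity of the filling operation yields the nesting $K_n\subset\accentset{\circ}{K}_{n+1}$, and $\bigcup_n K_n=X$ follows from $L_n\subset K_n$.

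The main obstacle is to prove that each $K_n$ is simply-connected with connected non-empty boundary---equivalently, that $K_n$ is homeomorphic to a closed disk. The argument must exploit simple connectivity of $X$ together with the ``filled'' property. Since $H_1(X)=0$, every smooth embedded circle in $X$ is null-homologous and hence separates $X$ into two components; for a boundary circle $\gamma$ of $K_n$, one side is one of the non-compact components of $X\setminus K_n$, while the other side $A_\gamma$ is a connected open set containing $K_n\setminus\gamma$. A Jordan--Schoenflies-type step---either via another application of Sard's lemma to a smooth null-homotopy of $\gamma$, in the spirit of the proof of Theorem~\ref{olomxn}, or by invoking Theorem~\ref{olomxn} itself on an auxiliary compact simply-connected sub-surface containing $\gamma$---should show that $\overline{A_\gamma}$ is a topological closed disk. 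If $\partial K_n$ had two distinct components $\gamma_1,\gamma_2$, both $\overline{A_{\gamma_1}}$ and $\overline{A_{\gamma_2}}$ would be disks containing $K_n$, and comparing these two nested disks would trap one of the non-compact complementary components of $K_n$ inside a compact disk, a contradiction. Hence $\partial K_n$ is a single circle $\gamma$ and $K_n=\overline{A_\gamma}$. The delicate point, where I expect the real work to lie, is making this Jordan--Schoenflies step rigorous using only the elementary tools permitted by the paper.
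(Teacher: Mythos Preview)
Your construction of the $K_n$---regular sublevel sets of a smooth proper function, take the component through $x_0$, then adjoin the relatively compact complementary pieces---is exactly the paper's. The gap is in the second half. You propose to show that $K_n$ is a disk by proving that every embedded circle $\gamma$ in the simply-connected surface $X$ bounds an embedded disk (a Jordan--Schoenflies step), and you acknowledge this is where the real work lies without carrying it out. Your first suggestion, invoking Theorem~\ref{olomxn} on ``an auxiliary compact simply-connected sub-surface containing $\gamma$'', is circular: the input to Theorem~\ref{olomxn} is precisely a compact simply-connected sub-surface with a single boundary circle, which is what you are trying to produce. Your second suggestion, mimicking the Sard argument from Theorem~\ref{olomxn} on a smooth null-homotopy of $\gamma$, only counts signed preimages of a point under a map of a disk; it does not deliver an \emph{embedded} disk with boundary $\gamma$. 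A Jordan--Schoenflies theorem on an abstract simply-connected surface is not among the paper's permitted prerequisites, and proving it from scratch with those tools is essentially as hard as the theorem you are aiming at.

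The paper sidesteps this entirely. Instead of showing that boundary circles bound disks, it first shows that for each connected component $Z$ of $X\setminus K_n$ the boundary $\partial Z$ is a single circle: if $\partial Z$ had two components $\gamma_1,\gamma_2$, one joins them by simple transverse arcs $\delta_1\subset Z$ and $\delta_2\subset K_n$ to form a smooth embedded loop $\delta$, builds a closed bump $1$-form $\omega=\eta(y)\,dy$ supported in a tubular neighbourhood of $\delta$ with $\int_{\gamma_1}\omega=\pm1$, and contradicts $\pi_1(X)=0$. Then, for each such $Z$, it cuts $Z$ along a simple proper ray from the basepoint of $\gamma=\partial Z$ to infinity, applies the Tietze extension theorem to get a continuous map from the cut surface $\tilde Z$ to $[0,1]$ extending $\gamma^{-1}$ on the boundary arc, and composes with $\gamma$ to obtain a retraction $\Psi_Z:Z\to\gamma$. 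Patching the $\Psi_Z$ with the identity on $K_n$ gives a retraction $\rho_n:X\to K_n$, whence $\pi_1(K_n)=0$ since $\pi_1(X)=0$. No embedded-disk statement is ever needed.
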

\begin{proof}
By Rad\'o's theorem we can choose a countable locally finite cover of $X$ by relatively compact sets
$(U_m)_{m\in \mathbb{N}}$
and a subordinated partition of unity $(\phi_m)_{m\in \mathbb{N}}$. Define a smooth function
\begin{align*}
g:X\longrightarrow [0,\infty),&& 
g(x)=\sum_{m\in \mathbb{N}} m \phi_m (x).
\end{align*}
For every integer $m$, $g^{-1}([0,m]) \subset \supp \phi_1 \cup...\cup \supp
\phi_{m}$ is compact, hence
$g$ is proper and unbounded above.
By Sard's lemma, there exists an increasing sequence of regular values
$(a_n)_{n \in \mathbb N}$ with $a_n\rightarrow \infty$ and we may suppose that
$a_0>g(x_0)$.
Consider $Y_n'=\lbrace x \in X \vert g(x)\leq a_n \rbrace $ and let $Y_n$ be the
connected component of $Y_n'$ containing $x_0$. Obviously $Y_n' \subset
Y_{n+1}'$, so $Y_n \subset Y_{n+1}$.
Since $X$ is a connected manifold, it is path-connected 
so, for every $x\in X$, there exists a path $\gamma:
[0,1]\longrightarrow X$ with $\gamma(0)=x_0$ and $\gamma(1)=x$. The set
$g(\gamma[0,1])$ is compact, so 
$g(\gamma[0,1]) \subset [0,a_k]$ for some $k \in \mathbb N$. 
Then $\gamma([0,1]) \subset Y_k'$ and, by connectedness,
$\gamma([0,1]) \subset Y_k$, hence $x\in Y_k$ so $ \bigcup_{n} Y_n=X$. 

The compact sub-manifolds $Y_n$ are not exactly what we are looking for because they might have 'holes', like an annulus in $\CC$ (they are not holomorphically convex). The natural way to fill these holes is to add those connected components of $X \setminus Y_n$ with compact closure. Since any two components have distinct boundaries and the boundaries are subsets of $g^{-1}(a_n)$ (which is a compact $1$-manifold) we can define the compact sub-manifold $K_n$ as the finite union of $Y_n$ with these connected components. We can easily convince ourselves that $\bigcup_{n \in \mathbb N} K_n=X$, $K_n \subset \accentset{\circ}{K}_{n+1}$, and $K_n$ is connected (see picture).
\begin{center}
\includegraphics[width=12cm, height=4cm]{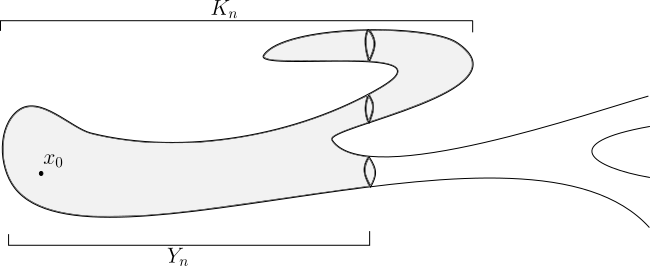}
\end{center} 
To finish the proof we have to show that $\pi_1(K_n,x_0)=0$. The idea is to find a retraction from $X$ to $K_n$. Let $Z$ be the closure of a connected component of
$X \setminus K_n$. First, we need to show that $\partial Z$ is connected.
Assume by contradiction that there existed two components $\gamma_1$ and $\gamma_2$ of $\partial Z$. We take a
point on each and connect them by smooth simple paths $\delta_1$ in $Z$ and $\delta_2$
in $K_n$, both transversal to $\gamma_1$ and $\gamma_2$ (the existence of such paths linking any two points on a
Riemann surface can be easily proved: the set
of points which can be connected by a simple curve to a given point is both
open and closed). Moreover we can assume that $\delta=\delta_1 \cup \delta_2$ is smooth. Then $\delta$ is
diffeomorphic to $S^1$ and we can consider a tubular neighbourhood $S^1\times (-\epsilon,\epsilon) \subset X$ with
coordinates $(x,y)$, where $x$ is the coordinate on the circle. Choose $\eta :
\mathbb R \longrightarrow \mathbb R$ a test function such that $\supp \eta \subset (-\epsilon,\epsilon)$ and $\int_{-\infty}^{\infty} \eta(y) dy=1$. Define a closed $1$-form $\omega \in
\Omega ^1(X)$ by $\omega = \eta (y) dy$, extended by $0$ outside the tubular neighbourhood. From the homotopy invariance of the integral, we can assume that the intersection of $\gamma_1$ with the tubular neighbourhood is a segment of type $\{x=\text{constant} \}$, hence $\int_{\gamma_1}\omega=\pm 1$. However $X$
is simply-connected, therefore the integral of any closed $1$-form on a loop is $0$, yielding the desired contradiction. Thus
$\partial Z$ is connected, and we parametrize it by $\gamma :[0,1] \longrightarrow
\partial Z \subset \partial K_n$, where $\gamma (0)=\gamma (1)=p$. 

Let $\delta$ be a simple path in $Z$ starting at $p$ escaping from every compact subset of $Z$. We cut $Z$ along $\delta$ to obtain a topological surface $\tilde{Z}$
with connected boundary $\partial \tilde{Z} = \delta' \cup [p',p''] \cup \delta''$ (see picture). 
\begin{center}
\includegraphics[width=12cm,height=4cm]{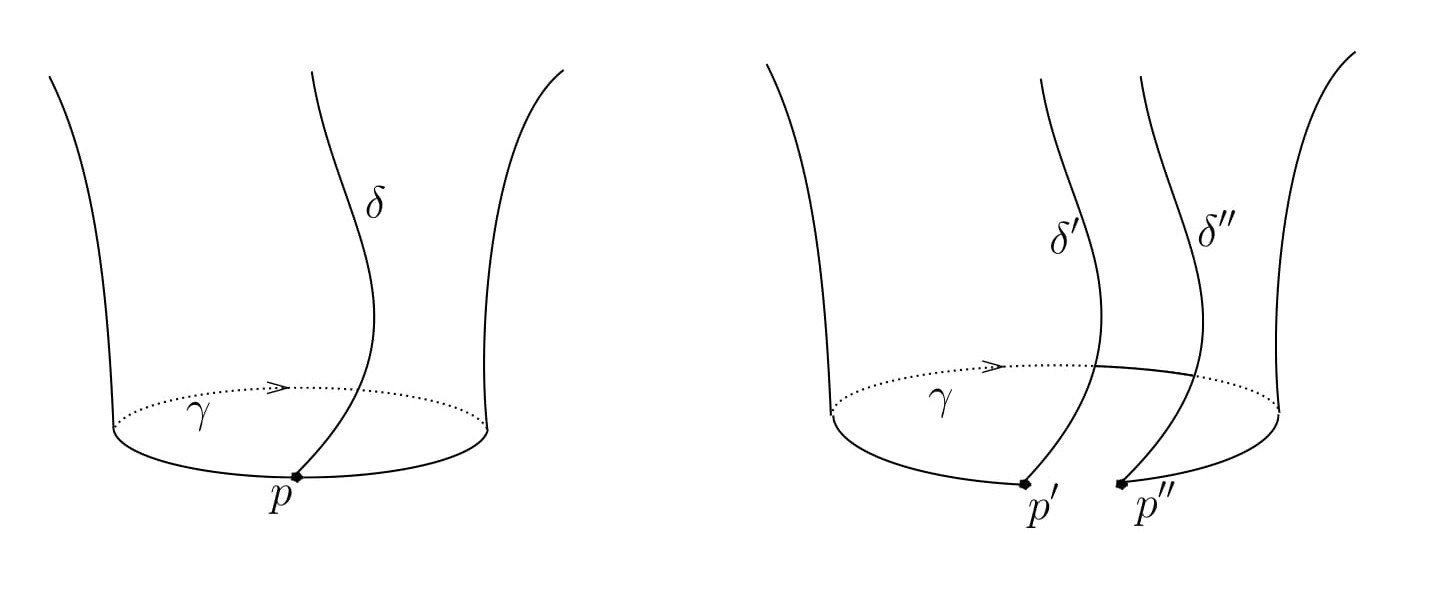}
\end{center} 
Let
$\mu: \partial \tilde{Z} \longrightarrow [0,1]$ be the continuous function 
$$
\mu(x)= \left\{
    \begin{array}{ll}
        0 & \mbox{ } x \in \delta' \\
        \gamma^{-1}(x) & \mbox{ } x \in \gamma \setminus \{ p \} \\          
        1 & \mbox{ } x \in \delta''.
    \end{array}
\right.
$$
By the Tietze extension theorem, we can extend $\mu$ to a continuous function
$\tilde{\mu}:\tilde{Z} \longrightarrow [0,1]$. Note that the composition $\gamma
\circ \tilde{\mu}$ takes the same value (namely $p$) on $\delta'$ and $\delta''$, therefore
it descends to a well defined map $\Psi_{Z} :Z \longrightarrow \gamma([0,1])\subset X$:

\[
\xymatrixcolsep{3pc}\xymatrix{
\tilde{Z}\ar[d] \ar[r]^{\tilde{\mu}}&[0,1]\ar[r]^{\gamma} & X\\
Z \ar@{-->}[urr]_{\Psi_{Z}} 
}
\]

Now let
$\rho_n:X\longrightarrow K_n$ be the retraction which acts as the identity on $K_n$ and
equals $\Psi_{Z}$ on every connected component $Z$ of $X \setminus
K_n$. Since the composition
 \[ K_n\hookrightarrow X \xrightarrow{\rho_n} K_n \] 
 is the identity map, it induces the identity on the fundamental groups. But
$\pi_1(X)=0$, thus $K_n$ is simply-connected. 
\end{proof}

\section{End of the proof of theorem \ref{inj}}
For $n \geq 0$, let $\phi_n:\accentset{\circ}{K_n} \longrightarrow r_n \DD$ be the unique
 biholomorphism given by Theorem \ref{olomxn} with $\phi_n(x_0)=0$ and $\phi_n'(x_0)=1$, where $r_n$ is a positive number determined by $K_n$,
 and the derivative is computed in a fixed coordinate near $x_0$.

For every Riemann surface $S$, let $\mathcal O(S)$ be the topological vector space of holomorphic functions from $S$ to $\CC$ with the topology defined by uniform convergence on compact subsets of $S$. 
\begin{lemma}\label{lema}
Let $(f_n:\DD \longrightarrow \mathbb C)_{n \in \mathbb N}$ be a sequence of \emph{injective} holomorphic functions with $f_n(0)=0$ and $f_n'(0)=1$. Then there exists a convergent subsequence $f_{n_k} \to f\in \mathcal O(\DD)$.
\end{lemma}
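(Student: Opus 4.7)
The plan is to apply Montel's theorem to the family $(f_n)$. The crucial intermediate step is to establish that the image domains $\Omega_n := f_n(\DD)$ contain a common disk $\DD_\rho$ around the origin for some $\rho > 0$ independent of $n$---a uniform lower bound on the ``inradius at the origin.'' This is essentially Koebe's $1/4$ theorem, which the paper forbids, so a separate argument is required.

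Granted such a uniform $\rho$, the inverse functions $h_n := f_n^{-1} \colon \DD_\rho \to \DD$ form a family uniformly bounded by $1$; Montel extracts a convergent subsequence $h_{n_k} \to h$ locally uniformly on $\DD_\rho$, which is injective by Hurwitz (since $h(0) = 0$ and $h'(0) = 1$). The local inverse $h^{-1}$ supplies convergence of $f_{n_k}$ on the open neighborhood $h(\DD_\rho)$ of the origin. Applying the same lower-bound principle after conjugating the disk by a M\"obius transformation moving an interior point to $0$ gives analogous disks inside $\Omega_n$ around each $f_n(z_0)$, leading to Koebe-distortion-type bounds on the $f_n$ themselves; these bounds make $(f_n)$ locally uniformly bounded on $\DD$, so another application of Montel produces the desired convergent subsequence.

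The main obstacle, and the point of the paper's advertised novelty, is obtaining the uniform lower bound on the inradius without Koebe's $1/4$ theorem. Arguing by contradiction, suppose $w_n \notin \Omega_n$ with $w_n \to 0$. The square-root substitution $\psi_n(z) := \sqrt{1 - f_n(z)/w_n}$, chosen so that $\psi_n(0) = 1$, is injective and holomorphic on $\DD$; its image satisfies $\psi_n(\DD) \cap (-\psi_n(\DD)) = \emptyset$ (a shared point would force $\psi_n$ to vanish somewhere, which is impossible); and $|\psi_n'(0)| = 1/(2|w_n|) \to \infty$. The strategy is then to uniformize $\psi_n(\DD)$ by the Riemann mapping theorem and exploit the reflection obstruction, with Sard's lemma used to select a regular level at which the resulting estimate on $|\psi_n'(0)|$ can be made precise without circularity, yielding a universal upper bound that contradicts $w_n \to 0$.
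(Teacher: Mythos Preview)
Your proposal has a genuine gap at the decisive step, and it also misreads where the paper's ``Sard's lemma'' novelty lies.

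First, the gap. Your plan hinges on a uniform inradius bound (a Koebe-$1/4$ surrogate), and you attempt it by contradiction via the square-root map $\psi_n=\sqrt{1-f_n/w_n}$ with $\psi_n(0)=1$. You then appeal to ``uniformize $\psi_n(\DD)$ by the Riemann mapping theorem and exploit the reflection obstruction, with Sard's lemma used to select a regular level \dots\ yielding a universal upper bound.'' This is not a proof: the images $\psi_n(\DD)$ vary with $n$, so the only point you know is omitted for all $n$ is $-1$; omitting a single point gives no Montel-type bound, and there is no clear mechanism by which Sard's lemma or the Riemann mapping theorem converts the condition $\psi_n(\DD)\cap(-\psi_n(\DD))=\emptyset$ into a uniform bound on $|\psi_n'(0)|$. (The classical route to such a bound is precisely the Koebe--Bieberbach circle you are trying to avoid.) Your downstream ``Koebe-distortion-type bounds'' inherit the same problem: distortion theorems are normally derived from the Koebe/Bieberbach inequalities, so invoking them without derivation is circular.

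By contrast, the paper avoids the uniform-inradius question entirely. It picks $a_n$ on the boundary of the largest disk about $0$ contained in $f_n(\DD)$, observes $|a_n|\le 1$ by Schwarz applied to $f_n^{-1}$ on $r_n\DD$, and passes to a subsequence with $a_n$ convergent. The rescaled maps $g_n=a_n^{-1}f_n$ then satisfy $\DD\subset g_n(\DD)$ and $1\notin g_n(\DD)$. This normalization is the key move: on the fixed set $\DD$ the branch $\psi_n$ of $\sqrt{z-1}$ with $\psi_n(0)=i$ is the \emph{same} map for every $n$, so $\psi_n(\DD)$ is a fixed open set containing $i$, and hence every $h_n=\psi_n\circ g_n$ omits the fixed open set $-\psi_n(\DD)\ni -i$. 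A single M\"obius map $\alpha(z)=1/(z+i)$ then sends all $h_n$ into a fixed bounded disk, Montel applies directly, and one recovers $f_n=a_n(1+h_n^2)$. No inradius bound, no distortion estimates, and no Sard argument are needed in this lemma; the paper's use of Sard occurs elsewhere (in building the compact exhaustion and handling the period of the harmonic conjugate), not here.
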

We defer the proof of this lemma to Appendix \ref{AC}.
Consider the holomorphic map $g_n: \mathbb D \longrightarrow \mathbb C$, $g_n(z)=\frac{1}{r_0}
\phi_n \circ \phi_0^{-1}(r_0z) $. Clearly $g_n$ is injective, $g_n(0)=0$, and $g_n'(0)=1$. 
By Lemma
\ref{lema}, there exists a convergent subsequence $(g_{n_{0k}})_{k \in \mathbb N}$. 
Therefore $(\phi_{n_{0k}})_{k \in \mathbb
N}$ converges on $\accentset{\circ}{K_0}$. Similarly, from the
sequence $\left( \frac{1}{r_1} \phi_{n_{0k}} \circ \phi_1^{-1}(r_1z)
\right)_{k\in \mathbb{N}}$, extract a subsequence $( \phi_{n_{1k}})_{k \in
\mathbb N}$ which converges on $\accentset{\circ}{K_1}$. 
Repeating this
process, we obtain the sequences $(\phi_{n_{mk}})_{k \in \mathbb N}$ which
converges on $\accentset{\circ}{K}_m$. Set $\phi_{n_k}=\phi_{n_{kk}}$. Now $(\phi_{n_k})_{k \in \mathbb N}$ converges on $X$ to a holomorphic function $\phi:X \longrightarrow {\mathbb C}$, uniformly on every compact of $X$. By continuity $\phi'(0)=1$, so $\phi$ is non-constant. Let us show that $\phi$ is injective: suppose there existed $z_1\neq z_2 \in X$ with $\phi(z_1)=\phi(z_2)$. Since $\phi$ is non-constant, there exists a circle $\mathcal C$ centered at $z_2$ whose interior does not contain $z_1$, and such that the difference $\phi-\phi(z_2)$ does not vanish on $\mathcal C$. Using the dominated convergence theorem, we get
\[
\lim_{n\rightarrow \infty} \frac{1}{2 \pi i} \int_{\mathcal C} \frac{\left( \phi_n(z)-\phi_n(z_1) \right)'(z)}{\left( \phi_n(z)-\phi_n(z_1) \right)(z)}dz= \frac{1}{2 \pi i} \int_{\mathcal C} \frac{(\phi-\phi(z_1)) '(z)}{\left( \phi-\phi(z_1) \right) (z)}dz, 
\]
The first integral is zero, while the second one counts the number of zeros of the function $\phi-\phi(z_1)$ in the disk bounded by $\mathcal C$, which is at least $1$, yielding a contradiction. Thus, $\phi$ is a biholomorphism onto
its image, ending the proof of Theorem \ref{inj}.

\appendix
\section{Proof of Perron's principle} \label{AA1}

Since the family $\mathcal{F}$ is locally bounded above, we can
define
$u:X \longrightarrow \mathbb{R}$, $u=\sup_{f\in \mathcal{F}} f.$
Our aim is to show that $u$ is harmonic. Since harmonicity is a local property, it suffices to prove that $u$ is harmonic on a
disk $D\subset X$. Let $A=\{ z_0, z_1,... \} \subset D$ be a dense subset. For each $j$, there exists a sequence $(v_{jk})_{k \in \mathbb N}
\subset \mathcal F$ such that $u(z_j)=\lim_{k\rightarrow \infty} v_{jk}(z_j)$.
Since $\mathcal F$ is a Perron family, the map $h_1=v_{11}^{(D)}$ belongs to $\mathcal F$, is harmonic on $D$, and $h_1 \geq v_{11}$. Suppose we constructed
the functions $\{h_1,...,h_n\} \subset \mathcal F$ harmonic on $D$ such that
$h_k \geq h_{k-1}$ and $h_k \geq v_{ij}$ for all $k\in \{1,...,n\}$, $i,j\in
\{1,...,k\}$. Pick $h_{n+1}=\max (v_{11},v_{12},...,h_n)^{(D)} \in \mathcal F$ harmonic on $D$, with $h_{n+1}
\geq h_{n}$ and $h_{n+1} \geq v_{ij}$ for any $i,j \in \{1,...,n+1\}$. Then
$h_n(z_j) \geq v_{jk}(z_j)$ for all $n \geq k \geq j$. Letting $k\rightarrow \infty$
in the previous inequality, we get: 
\[\lim_{n\rightarrow \infty}h_n(z_j)=u(z_j).\]     
Since $(h_n)_{n \in \mathbb N}$ is an increasing sequence, we can consider
the Lebesgue measurable function
\begin{align*}
w:D\longrightarrow \mathbb{R}, && w=\lim_{n\rightarrow \infty} h_n\leq 
\sup_{g\in\mathcal{F}}g_{\vert_{D}}=u_{\vert_D}.
\end{align*} 
By the dominated convergence theorem, for every disk $D'$ centered at a point $x\in D$ we have
\[
w(x)=\lim_{n\rightarrow \infty} h_n(x)=\lim_{n\rightarrow \infty} \frac{1}{\pi
}\int_{D'} h_n(z)dz=\frac{1}{\pi }\int_{D'} \lim_{n\rightarrow \infty}
h_n(z)dz=\frac{1}{\pi }\int_{D'} w(z)dz.
 \]
Here we used the mean property on disks for harmonic functions, which is an immediate consequence of the definition.
This implies easily that $w$ is $C^0$. Again by dominated convergence,
\[
 \frac{1}{2\pi} \int_{0}^{2\pi} w(e^{it}) dt=\frac{1}{2\pi} \int_{0}^{2\pi}
\lim_{n\rightarrow \infty} h_n(e^{it}) dt= \lim_{n\rightarrow \infty}
\frac{1}{2\pi}\int_{0}^{2\pi} h_n(e^{it}) dt=\lim_{n\rightarrow \infty}
h_n(x)=w(x)
 \]
thus showing that $w$ is harmonic. We
claim that $w=u$. 

Since $h_n \in \mathcal F$ for any $n \in \mathbb N$, then $h_n \leq u$, so $w
\leq u$. But $w(z_j)=u(z_j)$ for all $j \in \mathbb N$, hence $w \geq v$ on the 
dense subset $A \subset D$ for every $v \in \mathcal F$. Since $\mathcal F$ is a family of
continuous functions, it follows that $w \geq v$ for any $v \in \mathcal F$,
therefore $w \geq u$, which ends the proof.

\section{Proof of Dirichlet's principle} \label{AA2}

If $f$ is constant, the conclusion is clear, otherwise let $m<M$ be the infimum, respectively the supremum of $f$ on $\partial Y$.
Consider the family $\mathcal F$ of continuous functions $g:Y \longrightarrow [m,M]$
which are subharmonic on the interior of $Y$ with $g_{\vert_{\partial Y}}
\leq f$. 
The first condition for $\mathcal F$ to be a Perron family is evident.
Let $g \in \mathcal F$ and $D'$ a disk in $Y$. Using Remark \ref{sublocal}, $g^{(D')}$ is subharmonic. By the maximum principle, $g^{(D')}$ still takes values in $[m,M]$. Thus $g^{(D')} \in \mathcal F$. 
Now $\mathcal{F}\neq \emptyset$ since it contains the constant function
$m$. Let $F=\sup_{g \in \mathcal F} g$. By Perron's principle,
$F$ is harmonic on $\accentset{\circ}{Y}$. 

We must prove that for every $x\in\partial Y$, $\lim_{\xi \to x}F(\xi)=f(x)$. Let us first show
that $\liminf_{\xi\to x}
F(\xi)\geq f(x)$. If $f(x)=m$, there is nothing to prove, hence let us suppose that $f(x)>m$. We work in a disk $D\subset X$ centered at $x$ with local coordinate $\xi$.

Fix $\epsilon>0$ such that $f(0)-\epsilon>m$. There exists $1>\delta>0$ such that for every $\xi
\in\partial Y$
with $|\xi|<\delta$ we have $f(0)-\epsilon <f(\xi)$. Consider the exterior normal to $\partial Y$ at $x=0$. Take a circle centered at a point $p$ on this normal, of radius $r$ small enough such that it touches 
$Y$ only in $0$. Choose $t>1$ such that $\log\frac{1}{t-1}+f(0)-\epsilon<m$ and set $R=rt$. By decreasing $r$ if needed, we can assume that $R<\delta$. Clearly, $|p|\leq |\xi-p|$ for every $\xi\in Y\cap D$.
Define
\begin{align*}
u:D\to [m, \infty),&&
u(\xi)=\max \left(m,\log \tfrac{|p|}{|\xi-p|}
+f(0)-\epsilon\right).
\end{align*}
Notice that $u$ is the maximum of two harmonic functions, hence subharmonic. When $|\xi|$ is close to $0$,  
$u(\xi)=\log \tfrac{|p|}{|\xi-p|}+f(0)-\epsilon$, while for $|\xi|\geq R$, $u(\xi)=m$. Hence $u$ can be continuously extended (by the constant value $m$) to $Y\cup D$, and is subharmonic on $\accentset{\circ}{Y}$. It is straightforward to check that $u_{\vert_{Y}}$ belongs to the family $\mathcal{F}$. Thus, for small $|\xi|$, we get:
\begin{align*}
 F(\xi) \geq u(\xi)=\log\tfrac{|p|}{|\xi-p|}+f(0)-\epsilon.
\end{align*}
Since $\epsilon$ was arbitrarily small, we obtain $\liminf_{\xi \to 0} F(\xi) \geq f(0)$.
We now prove that $\limsup_{\xi \to 0} F(\xi) \leq f(0)$. If $f(0)=M$, this is clear. Otherwise, take $\epsilon>0$ such that $f(0)+\epsilon< M$. As above, consider $t>1$ and $R=rt$ such that $-\log \frac{1}{t-1}+f(0)+ \epsilon>M$, and $f(\xi)<f(0)+ \epsilon$ for $|\xi| \leq R$. Define 
\begin{align*}
U:D\to (-\infty,M],&&
U(\xi)=\min \left(M,-\log \tfrac{|p|}{|\xi-p|} + f(0)+\epsilon\right).
\end{align*}
Notice that $U$ is the minimum of two harmonic functions, thus $-U$ is subharmonic. Also $U(\xi)=M$ for
$|\xi|>R $, $U(\xi)=-\log \tfrac{|p|}{|\xi-p|} + f(0)+\epsilon$ for small $|\xi|$, $U \geq m$, and
$U(\xi)\geq f(\xi)$ on $\partial Y$. For every $g\in\mathcal{F}$, the
continuous function
$g-U$ is non-positive on the boundary of the compact domain $\{|\xi|\leq R \}\cap
Y$ and subharmonic in the interior.
By the maximum principle it follows that on this compact set $g\leq U$, hence
$F\leq U$. Thus for small $|\xi|$, 
\begin{align*}
F(\xi)\leq -\log\tfrac{|p|}{|\xi-p|}+f(0)+\epsilon.
\end{align*}
Therefore $\limsup_{\xi\to 0} F(\xi) \leq f(0)$, showing that $\lim_{\xi \to 0} F(\xi)$ exists and equals $f(0)$.

\section{Second countability in the presence of a holomorphic function}\label{AB}
Let us prove that if there exists a non-constant holomorphic function $f:X\longrightarrow \mathbb
C$, then the Riemann surface $X$ is second countable.

Let $\mathcal B$ be a countable basis of topology for $\mathbb C$. Let $\mathcal
A$ be the set of those connected components of each $f^{-1}(U)$, $U \in \mathcal
B$,
which are second countable. We claim that $A$ is a basis of topology for $X$.
Indeed, let $D \subset X$ be an open set and $x \in D$. By the identity theorem for holomorphic
functions, $f^{-1}\left (
f(x) \right)$ is discrete, hence there exists an open  neighbourhood
$W$ of $x$, relatively compact in $D$, such that $W \cap f^{-1}
\left( f(x) \right)= \lbrace x  \rbrace$. We have that $f \left( \partial \overline{W}
\right)$ is compact in $\mathbb C$. Since $f(x) \notin
f \left( \partial \overline{W} \right)$, there exists $U \in \mathcal B$ which contains
$f(x)$ such that $U \cap f \left( \partial \overline{W} \right) =\emptyset$. Let $V$ be the connected component of $f^{-1}(U)$ which contains $x$. Since $V \cap
\partial \overline{W}=\emptyset$, we get that $V \subset W$, hence $V$ is second countable.
We found $x \in V \subset D$, with $V \in \mathcal A$, therefore the claim is
proved. We next prove that $\mathcal A$ is countable.

Each $V \in \mathcal A$ intersects countably many other
elements in $\mathcal A$. Otherwise, there would exist $U \in \mathcal B$ such that
$V$ intersects uncountably many connected components of $f^{-1}(U)$. It would follow that $V$ contains uncountably many disjoint open subsets, which is a
contradiction.

Consider $V_0 \in \mathcal A$. There exists a countable number of open sets from
$\mathcal A$ which intersect $V_0$, denote their union with $V_0$ by $V_1$. For $k \geq 1$, define $V_{k+1}$ as the union of $V_k$ with those open sets in $\mathcal A$ which intersect $V_k$. By induction, $V_k$ is countable. It is clear that $\cup_{k \geq 1} V_k$ is both open and closed, hence $\cup_{k \geq 1} V_k=X$. Therefore all the open sets from $\mathcal A$ appear in this process, so $\mathcal A$ is a countable union of countable sets, hence countable.

\section{Compactness of families of injective holomorphic functions} \label{AC}

\begin{proof}[Proof of Lemma \ref{lema}]
Denote by $r_n=\sup \{ r \in
\mathbb{R}: r\DD \subset f_n(\mathbb D) \}$. Let $a_n \in \partial (r_n \overline { \DD}) \setminus f_n(\mathbb D)$. From the Schwarz lemma for the
function ${f_n^{-1}}_{\vert_{r_n \DD}}$, it
follows that $|a_n|=r_n \leq 1$. By extracting a subsequence, we can assume that $(a_n)_{n \in \NN}$ is convergent. Consider the
function $g_n: \mathbb D \longrightarrow \CC$,
$g_n=a_n^{-1} f_n$. It is easy to see that $\mathbb D \subset
g_n(\mathbb D)$ and also $1 \notin g_n(\mathbb D)$. Since $g_n$ is holomorphic
and injective, it follows that $g_n(\mathbb D)$ is simply-connected, hence we can construct the square root
\begin{align*}
&\psi_n:g_n(\mathbb D)\longrightarrow \mathbb C^{*}
&\psi_n(z)=ie^{\frac{1}{2}\int_0^z \frac{dw}{w-1}}.
\end{align*}
Then $\psi_n(0)=i$ and $\psi_n^2(z)=z-1$ for $z \in g_n(\mathbb D)$.
Since $\psi_n^2$ is injective, the image of $\psi_n$ cannot contain pairs of the form $(w,-w)$. Thus, there exists a disk $D$ centered at $-i$ disjoint from the image of $\psi_n$ for every $n$.
Define $h_n=\psi_n \circ g_n:
\mathbb D \longrightarrow \CC\setminus D$. There exists $0<r<\infty$ so that
the homography $\alpha(z)=\frac{1}{z+i}$ maps $\CC\setminus D$ into $r\DD$. Using
Montel's Theorem and relabeling the sequence, we can assume that $ \alpha \circ h_{n} $ converges to a
holomorphic map $h:\mathbb{D}\longrightarrow r  \overline{\DD}$. If
$h$ is non-constant, it is open by Section \ref{localbeh}, hence it maps $\mathbb{D}$ to $r\DD$.
Thus
we can compose it with the inverse homography $\alpha^{-1}$. Otherwise,
$h$ equals the constant $\frac{1}{2i}$. In both cases, $h_n$
converges to $\alpha^{-1} \circ h$, thus $f_{n}=a_{n}\left( 1+ h_n^2 \right)$ converges in $\mathcal O (\DD)$.
\end{proof}

\end{document}